\definecolor {processblue}{cmyk}{0.96,0,0,0}
\theoremstyle{definition}
\newtheorem{theorem}{Theorem}
\newtheorem{rk}{Remark}
\newtheorem{df}{Definition}
\newcommand{\field}[1]{\mathbb{#1}}
\newcommand{\G}{\field{G}}
\newcommand{\C}{\field{C}}
\newcommand{\W}{\field{W}}
\begin{document}

\begin{frontmatter}

\title{On Attractors of Isospectral Compressions of Networks}

\author{Leonid Bunimovich}
\ead{bunimovh@math.gatech.edu}

\author{Longmei Shu\corref{cor1}}
\address{School of Mathematics,
Georgia Institute of Technology,
Atlanta, GA 30332-0160 USA}

\cortext[cor1]{Corresponding author}

\ead{lshu6@math.gatech.edu}

\begin{abstract}
In the recently developed theory of isospectral transformations of networks isospectral compressions are performed with respect to some chosen characteristic (attribute) of nodes (or edges) of networks. Each isospectral compression (when a certain characteristic is fixed) defines a dynamical system on the space of all networks. It is shown that any orbit of such dynamical system which starts at any finite network (as the initial point of this orbit) converges to an attractor. Such attractor is a smaller network where a chosen characteristic has the same value for all nodes (or edges). We demonstrate that isospectral contractions of one and the same network defined by different characteristics of  nodes (or edges) may converge to the same as well as to different attractors. It is also shown that spectrally equivalent with respect to some characteristic networks could be non-spectrally equivalent for another characteristic of nodes (edges). These results suggest a new constructive approach to analysis of networks’ structures and to comparison of topologies of different networks.
\end{abstract}

\begin{mscc}
05C50 \sep 15A18
\end{mscc}

\begin{keyword}
isospectral transformations \sep spectral equivalence \sep attractors
\end{keyword}

\end{frontmatter}

\section{Introduction}
Arguably the major scientific buzzword of our time is a "Big Data". When talking about Big Data people usually refer to (huge) natural networks in communications, bioinformatics, social sciences, etc, etc, etc. In all cases the first idea and hope is to somehow reduce these enormously large networks to some smaller objects while keeping, as much as possible, information about the original huge network. 

In practice almost all the information about real world networks is contained in their adjacency matrices \cite{newman06,newman10}. An adjacency matrix of a network with $N$ elements is the $N\times N$ matrix with zero or one elements. An $(i,j)$ element equals one if there is direct interaction between the elements number {i} and number {j} of a network. In the graph representation of a network it corresponds to the existence of an edge (arrow) connecting a node $i$ to the node $j$. Otherwise an $(i,j)$ element of the adjacency matrix of a network equals zero. it is very rarely \cite{,newman06,newman10} that the strength of interaction of the element (node) $i$ with the element (node) $j$ is also known. In such cases a network is represented by a weighted adjacency matrix where to each edge $(i,j)$ corresponds (instead of one) a weight equal to the strength of interaction along this edge. 

Therefore a problem of compression of information about a network is essentially a problem of compression of the weighted adjacency matrix of a network. It is a basic fact of linear algebra that all the information about a matrix is contained in its spectrum (collection of all eigenvalues of a matrix) and in its eigenvectors and generalized eigenvectors.

Recently a constructive rigorous mathematical theory was developed which allows to compress (reduce) matrices and networks while keeping ALL the information about their spectrum and eigenvalues. This approach was successfully applied to various theoretical and applied problems \cite{webb14}. The corresponding transformations of networks were called Isospectral Transformations. This approach is not reduced just to compression of networks. It also allows e.g. to grow (enlarge) networks while keeping stability of their evolution (dynamics), etc (see \cite{webb14,webb17}).

In the present paper we develop further this approach by demonstration that isospectral compressions generate a dynamical system in the space of all networks. We prove that any orbit of such dynamical system converges to an attractor which is a smaller network than the network which was an initial point (network) of this orbit.
Consider any characteristic of nodes (or edges) of a network. Then collection of all nodes (or edges) with a fixed value of this characteristic defines a subset of the set of all nodes (edges).   
It is important to mention that the current graph theory is lacking classification of all graphs which have the same characteristic of the all nodes even for such basic and simplest characteristics as inner and outer degrees. Clearly any full graph where any two nodes are connected by an edge (in case of undirected graphs) or by two opposite edges (in case of directed graphs) has the same value of any characteristic at any node. Therefore all full graphs are attractors of isospectral contractions. However, there are other attractors as well for any characteristic of nodes, and there is no general classification/description of corresponding attractors. However one can find such attractors when dealing with a concrete network. Therefore it is a natural tool for analysis of real world networks.
We demonstrate that by choosing different characteristics of nodes or edges of networks one gets generally different attractors. Structure of such attractors give a new important information about a given network.

We also discuss the notions of weak and strong spectral equivalences of networks and show that classes of equivalence with respect to a weak spectral equivalence consists of a countable number of classes of strongly spectrally equivalent networks.
Our results could be readily applicable to analysis of any (directed or undirected, weighted or unweighted) networks.

\section{Isospectral Graph Reductions and Spectral Equivalence}
In this section we recall definitions of the isospectral transformations of graphs and networks.

Let $\W$ be the set of rational functions of the form $w(\lambda)=p(\lambda)/q(\lambda)$, where $p(\lambda),q(\lambda)\in\C[\lambda]$ are polynomials having no common linear factors, i.e., no common roots, and where $q(\lambda)$ is not identically zero. $\W$ is a field under addition and multiplication \cite{webb14}.

Let $\G$ be the class of all weighted directed graphs with edge weights in $\W$. More precisely, a graph $G\in\G$ is an ordered triple $G=(V,E,w)$ where $V=\{1,2,\dots,n\}$ is the \emph{vertex set}, $E\subset V\times V$ is the set of \emph{directed edges}, and $w:E\to\W$ is the \emph{weight function}. Denote by $M_G=(w(i,j))_{i,j\in V}$ the \emph{weighted adjacency matrix} of $G$, with the convention that $w(i,j)=0$ whenever $(i,j)\not\in E$. We will alternatively refer to graphs as networks because weighted adjacency matrices define all static (i.e. non evolving) real world networks.

Observe that the entries of $M_G$ are rational functions. Let's write $M_G(\lambda)$ instead of $M_G$ here to emphasize the role of $\lambda$ as a variable. For $M_G(\lambda)\in\W^{n\times n}$, we define the spectrum, or multiset of eigenvalues to be $$\sigma(M_G(\lambda))=\{\lambda_i\in\C,i=1,\dots,n:\det(M_G(\lambda)-\lambda I)=0\}.$$

A path $\gamma=(i_0,\dots,i_p)$ in the graph $G=(V,E,w)$ is an ordered sequence of distinct vertices $i_0,\dots,i_p\in V$ such that $(i_l,i_{l+1})\in E$ for $0\le l\le p-1$. The vertices $i_1,\dots,i_{p-1}\in V$ of $\gamma$ are called \emph{interior vertices}. If $i_0=i_p$ then $\gamma$ is a \emph{cycle}. A cycle is called a \emph{loop} if $p=1$ and $i_0=i_1$. The length of a path $\gamma=(i_0,\dots,i_p)$ is the integer $p$. Note that there are no paths of length 0 and that every edge $(i,j)\in E$ is a path of length 1.

If $S\subset V$ is a subset of all the vertices, we will write $\overline S=V\setminus S$ and denote by $|S|$ the cardinality of the set $S$.

\begin{df}
\emph{(structural set).} Let $G=(V,E,w)\in\G$. A nonempty vertex set $S\subset V$ is a structural set of $G$ if 
\begin{itemize}
\item each cycle of $G$, that is not a loop, contains a vertex in $S$;
\item $w(i,i)\neq\lambda$ for each $i\in\overline S$.
\end{itemize}

In particular, if a structural set $S$ also satisfies $w(i,i)\neq\lambda_0,\forall i\in\overline S$ for some $\lambda_0\in\C$, then $S$ is called a $\lambda_0-$structural set.
\end{df}

\begin{rk}
It is easy to see that the complement of any single node is a structural set. Therefore for any subset $A$ of nodes of a network $G$, it is always possible to isospectrally compress the network $G$ to a network whose nodes belong to $A$ by removing the nodes in the complement of $A$ one after another.    
\end{rk}

\begin{df}
Given a structural set $S$, a \emph{branch} of $(G,S)$ is a path $\beta=(i_0,i_1,\dots,i_{p-1},i_p)$ such that  $i_0,i_p\in V$ and all $i_1,\dots,i_{p-1}\in\overline S$.
\end{df}
We denote by $\mathcal{B}=\mathcal{B}_{G,S}$ the set of all branches of $(G,S)$.
Given vertices $i,j\in V$, we denote by $\mathcal{B}_{i,j}$ the set of all branches in $\mathcal{B}$ that start in $i$ and end in $j$. For each branch $\beta=(i_0,i_1,\dots,i_{p-1},i_p)$ we define the \emph{weight} of $\beta$ as follows:
\begin{equation}
w(\beta,\lambda):=w(i_0,i_1)\prod_{l=1}^{p-1}\frac{w(i_l,i_{l+1})}{\lambda-w(i_l,i_l)}.
\end{equation}

Given $i,j\in V$ set
\begin{equation}
R_{i,j}(G,S,\lambda):=\sum_{\beta\in\mathcal{B}_{i,j}}w(\beta,\lambda).
\end{equation}

\begin{df}
\emph{(Isospectral reduction).} Given $G\in\G$ and a structural set $S$, the reduced adjacency matrix  $R_S(G,\lambda)$ is the $|S|\times |S|-$matrix with the entries $R_{i,j}(G,S,\lambda),i,j\in S$. This adjacency matrix $R_S(G,\lambda)$ on $S$ defines the reduced graph which is the isospectral reduction of the original graph $G$.
\end{df}

Now we recall the notion of spectral equivalence of networks (graphs).

Let $\W_\pi\subset\W$ be the set of rational functions $p(\lambda)/q(\lambda)$ such that $\text{deg}(p)\le\text{deg}(q)$, where $\text{deg}(p)$ is the degree of the polynomial $p(\lambda)$. And let $\G_\pi\subset\G$ be the set of graphs $G=(V,E,w)$ such that $w:E\to\W_\pi$. Every graph in $\G_\pi$ can be isospectrally reduced \cite{webb14}.

Two weighted directed graphs $G_1=(V_1,E_1,w_1)$ and $G_2=(V_2,E_2,w_2)$ are \emph{isomorphic} if there is a bijection $b:V_1\to V_2$ such that there is an edge $e_{ij}$ in $G_1$ from $v_i$ to $v_j$ if and only if there is an edge $\tilde e_{ij}$ between $b(v_i)$ and $b(v_j)$ in $G_2$ with $w_2(\tilde e_{ij})=w_1(e_{ij})$. If the map $b$ exists, it is called an \emph{isomorphism}, and we write $G_1\simeq G_2$.

An isomorphism is essentially a relabeling of the vertices of a graph. Therefore, if two graphs are isomorphic, then their spectra are identical. The relation of being isomorphic is reflexive, symmetric, and transitive; in other words, it's an equivalence relation.

In \cite{webb14} was introduced a notion of spectral equivalence of graphs (networks). It says that two networks $G$ and $H$ are spectrally equivalent if they reduce to isomorphic graphs in one step under the same rule for subset selection. Then in \cite{shu18} was introduced less restrictive notion of generalized spectral equivalence of graphs (networks). Namely, two networks are weakly spectrally equivalent if they reduce to isomorphic graphs in finite steps (not necessarily the same number of steps) under the same rule for subset selection.

A proof of the following theorem can be found in \cite{shu18}.

\begin{theorem}[Generalized Spectral Equivalence of Graphs]
Suppose that for each graph $G=(V,E,w)$ in $\G_\pi$, $\tau$ is a rule that selects a unique nonempty subset $\tau(G)\subset V$. Let $R_\tau$  be the isospectral reduction of $G$ onto $\tau(G)$. Then $R_\tau$ induces an equivalence relation $\sim$ on the set $\G_\pi$, where $G\sim H$ if $R_\tau^m(G)\simeq R_\tau^k(H)$ for some $m,k\in\mathbb{N}$.
\end{theorem}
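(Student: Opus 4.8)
The plan is to verify the three defining properties of an equivalence relation on $\mathcal{G}_\pi$, with essentially all of the content residing in transitivity.

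\textbf{Reflexivity and symmetry.} These I would dispatch immediately. Reflexivity follows by taking $m=k$ (say $m=k=1$), so that $R_\tau^m(G)=R_\tau^k(G)$ and every graph is isomorphic to itself. Symmetry is inherited directly from the symmetry of graph isomorphism: if $R_\tau^m(G)\simeq R_\tau^k(H)$, then $R_\tau^k(H)\simeq R_\tau^m(G)$, whence $H\sim G$.

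\textbf{Key lemma.} The crux is to show that the reduction operator descends to isomorphism classes and preserves isomorphism, i.e.
\begin{equation*}
G_1\simeq G_2 \quad\Longrightarrow\quad R_\tau(G_1)\simeq R_\tau(G_2).
\end{equation*}
I would argue this as follows. Let $b:V_1\to V_2$ be an isomorphism. Since $\tau$ is a rule depending only on graph structure, it must respect relabelings, so $b\big(\tau(G_1)\big)=\tau(G_2)$, and the two reductions are taken onto corresponding vertex sets. It then remains to check that the branch weights are invariant under $b$: the map $b$ sends each branch $\beta=(i_0,\dots,i_p)$ of $(G_1,\tau(G_1))$ bijectively to a branch of $(G_2,\tau(G_2))$, preserving all edge weights $w(i_l,i_{l+1})$ and all loop weights $w(i_l,i_l)$ appearing in the denominators, so $w(\beta,\lambda)$ is unchanged. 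Consequently each entry $R_{i,j}(G_1,\tau(G_1),\lambda)$ equals $R_{b(i),b(j)}(G_2,\tau(G_2),\lambda)$, and $b$ restricts to an isomorphism of the reduced graphs. Iterating gives $R_\tau^n(G_1)\simeq R_\tau^n(G_2)$ for every $n\in\mathbb{N}$.

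\textbf{Transitivity.} Assume $G\sim H$ and $H\sim K$, so $R_\tau^m(G)\simeq R_\tau^k(H)$ and $R_\tau^{m'}(H)\simeq R_\tau^{k'}(K)$ for suitable $m,k,m',k'$. The idea is to push both isomorphisms to a common power of $H$. Applying the key lemma $m'$ times to the first relation and $k$ times to the second yields
\begin{equation*}
R_\tau^{m+m'}(G)\simeq R_\tau^{k+m'}(H),\qquad R_\tau^{m'+k}(H)\simeq R_\tau^{k'+k}(K).
\end{equation*}
Because $k+m'=m'+k$, the two middle terms coincide, and transitivity of $\simeq$ delivers $R_\tau^{m+m'}(G)\simeq R_\tau^{k+k'}(K)$, i.e. $G\sim K$.

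\textbf{Main obstacle.} The transitivity bookkeeping is routine once the alignment trick $j=m'$, $j'=k$ is spotted. The genuine content, and the step I expect to demand the most care, is the key lemma: one must confirm both that the selection rule $\tau$ is truly isomorphism invariant (without this, $R_\tau$ is not even well defined on isomorphism classes, and the relation need not be transitive) and that the rational branch weights defining $R_S$ are preserved under relabeling. With these secured, the iteration $R_\tau^n(G_1)\simeq R_\tau^n(G_2)$ follows by a straightforward induction and the remainder is purely formal.
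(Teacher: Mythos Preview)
The paper does not actually prove this theorem: it states it and defers the proof entirely to the cited reference \cite{shu18}, so there is no in-paper argument to compare against.

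That said, your proposal is the natural and correct one. Reflexivity and symmetry are indeed immediate, and your transitivity argument via the alignment $R_\tau^{m+m'}(G)\simeq R_\tau^{k+m'}(H)=R_\tau^{m'+k}(H)\simeq R_\tau^{k+k'}(K)$ is exactly the right bookkeeping. You have also correctly isolated the only substantive point: the key lemma that $G_1\simeq G_2$ implies $R_\tau(G_1)\simeq R_\tau(G_2)$, which in turn hinges on the selection rule $\tau$ being isomorphism-invariant. The theorem as stated in the paper leaves this implicit in the word ``rule,'' and you are right to flag it as the genuine hypothesis doing the work; without it transitivity fails. Your verification that branch weights are preserved under a relabeling bijection is straightforward and correct.
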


\begin{rk}
Observe that we do not require $\tau(G)$ to be a structural subset of $G$. However there is a unique isospectral reduction \cite{webb14} (possibly via a sequence of isospectral reductions to structural sets if $\tau(G)$ is not a structural subset of $G$) of $G$ onto $\tau(G)$.

The notion of generalized spectral equivalence of networks (graphs) is weaker than the one considered in \cite{webb14}, where it was required that $m=k=1$. Therefore the classes of weakly spectrally equivalent networks are larger than the classes of spectrally equivalent networks considered in \cite{webb14}. Namely each class of equivalence in the weak sense consists of a countable number of equivalence classes in the (strong) sense of \cite{webb14}. In what follows we will refer to the spectral equivalence in the form introduced in \cite {webb14} as to a strong spectral equivalence, and to the notion of spectral equivalence introduced in \cite{shu18} as to the weak spectral equivalence. Both of the strong and weak notions of spectral equivalence could be of use for analysis of real world networks many of which have a hierarchical structure \cite{ACM}, \cite{JKAM}. 
\end{rk}

\section{Attractors of Isospectral Reductions}

Isospectral reductions of networks (graphs) define a dynamical system on the space of all networks. This dynamical system arises by picking any node (edge) of a network and isospectrally reducing this network to a network where the set of nodes is a complement to a chosen node. The fact that such isospectral reductions form a dynamical system follows from the Commutativity theorem proved in \cite{webb14} which states that sequence of isospectral contractions over a set of nodes $A$ and then over the set of nodes $B$ gives the same result as isospectral reduction over $B$ followed by the one over $A$. Therefore to one and the same network (graph) $G$ correspond different orbits depending on the order in which we pick nodes of $G$ for reductions.

By doing that again and again it is possible to isospectrally reduce any network to a trivial network which has just one node, i.e. any node of $G$. It is clearly a senseless operation. However we can choose a reasonable rule which will help to recover and understand some intrinsic feature(s) of the structure (topology) of the network $G$.   
Generally a network can have many different structural sets. To make the isospectral contraction focused on specific properties of networks, we can add some specific rules to the selection of structural sets.

Before we do that, let us recall a few characteristics of nodes in a graph. (There are about ten-fifteen such characteristics of nodes and edges of networks which are all borrowed from the graph theory).

For a graph $G=(V,E,w)$, the indegree for a node $v\in V$, $d^-(v)$, is the number of edges that end in $v$. The outdegree $d^+(v)$ is the number of edges that start at $v$. Let's define $d(v)=d^-(v)+d^+(v)$ to be the sum of the indegree and outdegree for any node.

Let $\sigma_{st}$ be the total number of shortest paths from node $s$ to node $t$, and $\sigma_{st}(v)$ is the number of those paths that pass through $v$. Note that $\sigma_{st}(v)=0$ if $v\in\{s,t\}$ or if $v$ does not lie on any shortest path from $s$ to $t$. We call $$g(v)=\sum_{s\neq v}\sum_{t\neq v,s}\sigma_{st}(v)$$ the centrality/betweenness of node $v$.

\begin{theorem}
For any network and any characteristic of its nodes (edges), any orbit of a dynamical system generated by isospectral reductions with respect to the chosen characteristic, converges to an attractor which is such a network that all its nodes (edges) have one and the same value of this characteristic of nodes (edges). 
\end{theorem}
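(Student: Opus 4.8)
The plan is to realize the described process as the iteration of a single self-map on the (finite) space of networks and to exhibit a strictly decreasing integer monovariant that forces every orbit to become eventually constant. Fix a characteristic $c$ assigning to each node $v$ a numerical value $c(v)$. The characteristic determines a selection rule $\tau$ in the sense of the Generalized Spectral Equivalence theorem: given $G=(V,E,w)$, let $\tau(G)\subset V$ be the set of nodes attaining the maximal value of $c$ on $G$ (any fixed extremal selection rule serves equally well). The single property of $\tau$ I will use is the dichotomy that $\tau(G)=V$ precisely when $c$ is constant on $V$, and $\tau(G)$ is a proper nonempty subset of $V$ otherwise. The dynamical system is then the iteration $G_{k+1}=R_\tau(G_k)$, where $R_\tau(G_k)$ is the isospectral reduction of $G_k$ onto $\tau(G_k)$, which is well defined and unique by the remark following the Generalized Spectral Equivalence theorem; the characteristic $c$ is recomputed on each reduced network before the next step is taken.

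The key step is the monovariant. Write $n_k=|V(G_k)|$ for the number of nodes at step $k$. Since the isospectral reduction of $G_k$ onto $\tau(G_k)$ is a network whose vertex set is exactly $\tau(G_k)$, we have
\[
n_{k+1}=|\tau(G_k)|\le n_k ,
\]
with equality if and only if $\tau(G_k)=V(G_k)$, that is, if and only if $c$ is already constant on $G_k$. Thus $(n_k)_{k\ge 0}$ is a non-increasing sequence of positive integers that strictly decreases at every step at which the characteristic is not yet constant.

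I would then conclude by the well-ordering of the integers: a non-increasing sequence of positive integers is eventually constant, so there is a smallest $K$ with $n_{K+1}=n_K$. By the equality case above, $\tau(G_K)=V(G_K)$, hence $c$ is constant on $G_K$ and $G_{K+1}=R_\tau(G_K)=G_K$ is a fixed point. The orbit has therefore reached a network all of whose nodes carry one and the same value of $c$, which is exactly the claimed attractor. In this discrete setting ``convergence to an attractor'' means the orbit is eventually constant; the extreme possibilities are a full graph, on which every characteristic is constant by symmetry, and, in the most degenerate case, a single node, on which the characteristic is trivially constant. The argument for a characteristic of edges is identical once $\tau$ is made to retain, through the nodes it keeps, a subnetwork on which the edge characteristic is constant.

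The main obstacle is not the convergence --- which is immediate from the node-count monovariant --- but the precise specification of the rule $\tau$ for an arbitrary characteristic. One must choose $\tau$ so that it discards at least one node whenever $c$ takes two distinct values, while returning the whole vertex set as soon as $c$ is constant; verifying that the chosen rule (e.g.\ ``retain the nodes of maximal characteristic value'') satisfies this dichotomy, and that the recomputation of $c$ on each reduced network is well posed, is the only genuinely delicate point. Everything else reduces to the observation that a finite network cannot be isospectrally reduced onto a proper subset infinitely often.
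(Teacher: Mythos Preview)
Your proposal is correct and follows essentially the same approach as the paper: both arguments use the node count as a strictly decreasing integer monovariant, observing that each reduction removes at least one vertex whenever the characteristic is not yet constant, so the orbit stabilizes in at most $|V|$ steps at a network (possibly a single node) on which the characteristic is constant. Your version is more carefully formalized---you make the selection rule $\tau$ and its dichotomy property explicit---but the underlying idea is identical.
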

\begin{proof}
Each reduction removes at least one vertex (edge). Thus an orbit of a network under consecutive isospectral reductions becomes an attractor in no more than N steps, where $N:=|V|$ (or $N:=|E|$). Therefore an orbit of a finite network $G$ approaches an attractor in a finite number of steps which does not exceed the number of nodes (edges) in $G$. Such attractor always exists because any network can be isospectrally reduced to a graph with just one node. A process of consecutive isospectral reductions (i.e. an orbit of the corresponding dynamical system) will terminate at one node, if no one of the networks along this orbit had all its nodes (edges) with the same value of the characteristic that defined the isospectral reductions (i.e. the corresponding dynamical system).  Clearly in case of a "network" with only one node (edge) the values of all characteristics of all nodes (edges) are the same because there is only one node (edge).  
If $G$ is an infinite network then the corresponding orbit could be finite or infinite. 

\end{proof}

\begin{theorem}
The attractors of isospectral reductions with respect to different characteristics of one and the same network are generally different.
\end{theorem}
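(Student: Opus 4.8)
Since the theorem asserts only that the two attractors are \emph{generally} different --- i.e.\ that they need not coincide --- it is not a universal statement but an existence statement: it suffices to produce a single network $G$ together with two characteristics of its nodes for which the induced reduction dynamics terminate at non-isomorphic attractors. The plan is therefore to exhibit one explicit example rather than to argue in general, and for concreteness I would use the two characteristics already defined above, the total degree $d(v)=d^-(v)+d^+(v)$ and the betweenness $g(v)$.

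Before constructing $G$ I would pin down the selection rule $\tau$ attached to a characteristic $c$, since the orbit is only well defined once $\tau$ is fixed. A natural choice consistent with the preceding theorem is: at each step partition the current vertex set according to the value of $c$, and if this value is not already constant, reduce $G$ onto the complement of the block realizing the extreme (say minimal) value of $c$. By the Commutativity theorem the order within a block is immaterial, and by the preceding theorem the orbit halts after finitely many steps at the first network on which $c$ is constant; this terminal network is the attractor $A_c$.

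The key to the construction is to choose $G$ so that the node orderings induced by $d$ and by $g$ disagree from the very first step. The standard mechanism is a low-degree ``bottleneck'': take a graph built from two small dense clusters joined through a single cut vertex $v^\ast$. Inside the clusters the nodes have large degree but small betweenness, whereas $v^\ast$ has small degree yet carries every shortest path between the two clusters, hence maximal betweenness. Consequently the degree rule and the betweenness rule remove different vertices at the outset and steer the two orbits apart. I would then run $R_\tau$ for each characteristic --- reduce, recompute $c$ on the reduced graph, and iterate until $c$ is constant --- recording the terminal networks $A_d$ and $A_g$ and distinguishing them by their vertex counts, their edge weights, or their spectra.

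The main obstacle is that after the first reduction the graph no longer has $0/1$ weights but weights in $\W$, i.e.\ rational functions of $\lambda$, possibly with new self-loops and parallel branches arising from the removed vertices. Both $d$ and $g$ must therefore be given an unambiguous meaning on such reduced networks --- the betweenness especially, since ``shortest path'' is not a priori defined once edge weights are functions of $\lambda$ rather than numbers. I would fix a convention (for instance, counting branches combinatorially as in the definition of $\mathcal{B}_{i,j}$, with path length taken to be the integer $p$ as above) and, crucially, keep the example small enough that both orbits terminate after one or two reductions, before the weight bookkeeping proliferates. Once the two attractors are reached, verifying $A_d\not\simeq A_g$ is a routine finite check.
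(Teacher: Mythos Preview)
Your proposal is correct in its overall strategy --- exhibit one explicit network and two characteristics (degree and betweenness) for which the attractors differ --- and your bottleneck idea is close to what the paper actually does. But the paper's execution is substantially simpler, and the simplification is worth noting because it eliminates precisely the ``main obstacle'' you flag.

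Rather than build a graph where the degree ordering and the betweenness ordering disagree and then \emph{run both orbits to termination}, the paper chooses $G$ so that it is \emph{already} an attractor for the degree rule: an $11$-vertex graph, assembled from two dense $5$-cliques sharing a single bottleneck vertex, in which every vertex has degree exactly $4$. Since $d(v)$ is constant on $G$, the degree-reduction dynamics halts immediately and $A_d = G$. On the other hand the centrality $g(v)$ is not constant on $G$ (the bottleneck vertex has $g=66$, the clique interiors have $g=1$), so the centrality rule performs at least one nontrivial reduction and $A_g$ is a proper subnetwork of $G$. Hence $|V(A_g)| < |V(A_d)|$ and the two attractors cannot be isomorphic.

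This device --- make $G$ itself the attractor for one characteristic --- buys you a lot. You never have to carry out a single isospectral reduction, so the rational-function weights, the convention for betweenness on $\W$-weighted graphs, and the multi-step bookkeeping you worry about simply do not arise. Your approach would work, but it is doing unnecessary labor; the cleaner move is to engineer regularity in one characteristic from the start.
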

\begin{proof}
\begin {figure}[h]
\begin{tikzpicture}[-latex, auto, node distance = 0.5 cm and 0.5 cm, on grid, semithick, state/.style ={ circle, top color =white, bottom color = processblue!20, draw, processblue, text=blue, minimum width =0.1 cm}]
\node[state] (A) at (0, 0) {$1$};
\node[state] (B) at (-1,-1.3) {$2$};
\node[state] (C) at (0,-2.6) {$3$};
\node[state] (D) at (1.5,-2.6) {$4$};
\node[state] (E) at (2.5,-1.3) {$5$};
\node[state] (F) at (1.5,0) {$6$};
\node[state] (G) at (3.5,0) {$7$};
\node[state] (H) at (5,0) {$8$};
\node[state] (I) at (6,-1.3) {$9$};
\node[state] (J) at (5,-2.6) {$10$};
\node[state] (K) at (3.5,-2.6) {$11$};
\path[-] (A) edge (B) edge (C) edge (D) edge (F)
(B) edge (C) edge (D) edge (F)
(C) edge (D) edge (F)
(D) edge (E)
(E) edge (F) edge (G) edge (K)
(G) edge (J) edge (I) edge (H)
(H) edge (I) edge (J) edge (K)
(I) edge (J) edge (K)
(J) edge (K);
\end{tikzpicture}
\caption{A network which is an attractor with respect to degree but not with respect to centrality}\label{attractor}
\end{figure}
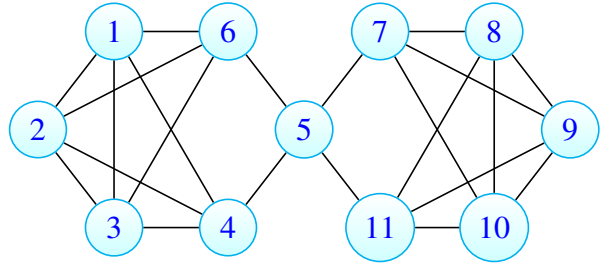

(i) In the example shown in the figure \ref{attractor}, all nodes have degree 4. This graph cannot be further reduced based on degree of nodes. However, the centrality of the nodes are different. If we count the number of shortest paths through each node, we can see $c(1)=c(2)=c(3)=c(8)=c(9)=c(10)=1,c(4)=c(6)=c(7)=c(11)=27,c(5)=66$. This graph can be further reduced based on centrality. Therefore for this network (graph)  attractors with respect to degree and to centrality are different.

(ii) The complete graph, where each and every node and edge have the same properties, can not be further reduced based on degree or other characteristics of a network. It is always an attractor. If we consider isospectral expansion (see \cite{webb17}) of a complete graph with respect to two different characteristics, then we get two different graphs (networks) with the same attractor with respect to these two characteristics. Clearly this attractor will be the initial complete graph.
\end{proof}

The result of theorem 3 is not surprising because different characteristics of nodes (or edges) define different dynamical systems on the space of all networks, and orbits of these different dynamical systems are also different. 

The following statement establishes that weakly as well as strongly spectrally equivalent networks have the same attractor if isospectral contractions are generated by the very same characteristic with respect to which these networks are spectrally equivalent. 

\begin{theorem}
Strongly as well as weakly spectrally equivalent with respect to some characteristic graphs have the same attractor under the dynamical system generated by isospectral compressions according to this characteristic.
\end{theorem}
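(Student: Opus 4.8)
The plan is to show that the entire orbit of the dynamical system is an isomorphism invariant, so that spectrally equivalent graphs travel along orbits that coincide from some point on and therefore terminate at the same attractor. Throughout, write $R_\tau$ for the one-step reduction associated with the chosen characteristic, so that the dynamical system is the iteration $G\mapsto R_\tau(G)\mapsto R_\tau^2(G)\mapsto\cdots$, and let $\mathcal{A}(G)$ denote the attractor reached by $G$. By Theorem 2 this attractor exists and is reached in finitely many steps; moreover it is a fixed point of $R_\tau$, since at the attractor all nodes share the common value of the characteristic and no further reduction is performed.

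The first and central step is a lemma: $R_\tau$ is well defined on isomorphism classes, i.e. $A\simeq B$ implies $R_\tau(A)\simeq R_\tau(B)$. I would prove this by observing that each of the characteristics under consideration (indegree, outdegree, the sum $d(v)$, centrality/betweenness) is invariant under relabeling of vertices, so an isomorphism $b:A\to B$ carries the subset selected in $A$ precisely onto the subset selected in $B$. Since the isospectral reduction $R_S$ is defined purely through branch weights $w(\beta,\lambda)$ and the reduced adjacency matrix, and since $b$ matches branches of $(A,\tau(A))$ to branches of $(B,\tau(B))$ with equal weights, the two reduced adjacency matrices agree after relabeling by $b$; hence $R_\tau(A)\simeq R_\tau(B)$. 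Iterating gives $R_\tau^n(A)\simeq R_\tau^n(B)$ for every $n\ge 0$, so in particular the attractor is itself an isomorphism invariant.

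With the lemma in hand both cases are short. For strong equivalence, $G\sim H$ gives $R_\tau(G)\simeq R_\tau(H)$; applying the lemma repeatedly yields $R_\tau^{n}(G)\simeq R_\tau^{n}(H)$ for all $n\ge 1$, and choosing $n$ large enough that both orbits have stabilized (finitely many steps suffice by Theorem 2) shows $\mathcal{A}(G)\simeq\mathcal{A}(H)$. For weak equivalence, $G\sim H$ gives $R_\tau^{m}(G)\simeq R_\tau^{k}(H)$ for some $m,k$; applying $R_\tau^{j}$ to both sides and invoking the lemma gives $R_\tau^{m+j}(G)\simeq R_\tau^{k+j}(H)$, and taking $j$ large enough that both sides have reached their attractors yields $\mathcal{A}(G)\simeq\mathcal{A}(H)$ again.

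The main obstacle is the isomorphism-invariance lemma, and specifically the well-definedness of $\tau$ on isomorphism classes. The selection rule must not depend on the arbitrary vertex labels: if the characteristic takes the same value at several vertices, the tie-breaking has to be carried out in an isomorphism-equivariant fashion, for otherwise two isomorphic graphs could be reduced onto non-corresponding subsets and their orbits would diverge. I would therefore make explicit that $\tau$ selects the subset determined solely by the (isomorphism-invariant) values of the characteristic, and verify the equivariance of the branch-to-branch correspondence under $b$; the remaining bookkeeping that equal branch weights produce equal reduced entries is then routine.
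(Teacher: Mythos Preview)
Your proposal is correct and follows essentially the same route as the paper: once the two orbits coincide up to isomorphism, they share the same attractor. The paper's own proof is much terser---it simply asserts that if $R_\tau(G)\simeq R_\tau(H)=R$ then both $G$ and $H$ have ``the attractor for $R$''---so your explicit lemma that $R_\tau$ descends to isomorphism classes (and your care about $\tau$ being label-independent) spells out precisely the step the paper takes for granted.
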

\begin{proof}
Suppose graph $G$ is strongly spectrally equivalent to $H$ with respect to rule $\tau$, i.e. $R_\tau(G)\simeq R_\tau(H)=R$, and $G$ is weakly spectrally equivalent to $K$ w.r.t $\tau$, i.e. $R_\tau^l(G)\simeq R_\tau^m(K)=S$.

If $R$ is an attractor under $\tau$, then the attractor for $G$ as well as for $H$ is $R$. So $G$ and $H$ have the same attractor  $R$. Otherwise $G$ and $H$ have the same attractor, the attractor for $R$.

Similarly $G$ and $K$ have the same attractor.

Therefore the attractors for all three graphs, $G,H,K$ are the same under rule $\tau$. So all three networks (graphs) have the same attractor with respect to the rule $\tau$. 
\end{proof}

Very important fact is that networks could be spectrally equivalent with respect to one characteristic of nodes (edges) but not spectrally equivalent with respect to another characteristic. Therefore spectral equivalences built on different characteristics of nodes and edges allow to uncover various intrinsic (hidden) features of networks' topology.

We now present an example where networks are isomorphic for one characteristic but not for another.

Consider the graphs $G$ and $H$ in figure \ref{equivalence}.

\begin {figure}[h]
\begin {tikzpicture}[-latex ,auto ,node distance =2 cm and 2cm ,on grid ,
semithick ,
state/.style ={ circle ,top color =white , bottom color = processblue!20 ,
draw,processblue , text=blue , minimum width =0.5 cm}]
\node at (1,-3.75) {Graph $G$};
\node[state] (A) at (0, 0) {$1$};
\node[state] (B) at (2,0) {$2$};
\node[state] (C) at (1,-1.5) {$3$};
\node[state] (D) at (-1.5,0) {$4$};
\node[state] (E) at (3.5,0) {$5$};
\node[state] (F) at (1,-2.75) {$6$};
\path (A) edge [] node [above] {$1$} (B);
\path (A) edge [] node [left] {$1$} (C);
\path (B) edge [] node [right] {$1$} (C);
\path (A) edge [bend left = 15] node [below] {$1$} (D);
\path (D) edge [bend left = 15] node [above] {$1$} (A);
\path (B) edge [bend left = 15] node [above] {$1$} (E);
\path (E) edge [bend left = 15] node [below] {$1$} (B);
\path (C) edge [bend left = 15] node [right] {$1$} (F);
\path (F) edge [bend left = 15] node [left] {$1$} (C);
\path (A) edge[loop above] node [] {$1/\lambda$} (A);
\path (B) edge [loop above] node [] {$1/\lambda$} (B);
\path (C) edge [loop right] node [] {$1/\lambda$} (C);

\node at (7,-3.75) {Graph $H$};
\node[state] (G) at (6,0) {$1$};
\node[state] (H) at (8,0) {$2$};
\node[state] (I) at (7,-1.5) {$3$};
\node[state] (J) at (9.5,0) {$5$};
\node[state] (K) at (7,-2.75) {$6$};
\path (G) edge [] node [above] {$1$} (H)
(G) edge [] node [left] {$1$} (I)
(H) edge [] node [right] {$1$} (I)
(H) edge [bend left = 15] node [above] {$1$} (J)
(J) edge [bend left = 15] node [below] {$1$} (H)
(I) edge [bend left = 15] node [right] {$1$} (K)
(K) edge [bend left = 15] node [left] {$1$} (I)
(G) edge [loop above] node [] {$2/\lambda$} (G)
(H) edge [loop above] node [] {$1/\lambda$} (H)
(I) edge [loop right] node [] {$1/\lambda$} (I);
\end{tikzpicture}
\caption{Original networks}
\label{equivalence}
\end{figure}
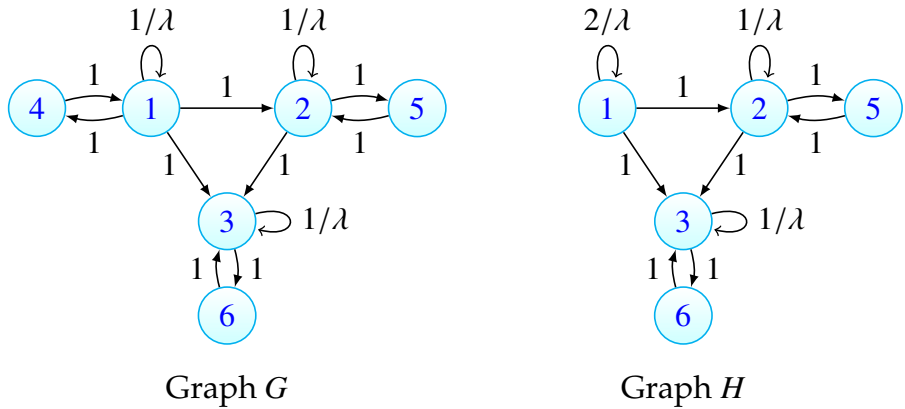

Their adjacency matrices are
$$M_G=\begin{pmatrix}1/\lambda & 1 & 1 & 1 & 0 & 0\\
0 & 1/\lambda & 1 & 0 & 1 & 0\\
0 & 0 & 1/\lambda & 0 & 0 & 1\\
1 & 0 & 0 & 0 & 0 & 0\\
0 & 1 & 0 & 0 & 0 & 0\\
0 & 0 & 1 & 0 & 0 & 0\end{pmatrix}, M_H=\begin{pmatrix}2/\lambda & 1 & 1 & 0 & 0\\
0 & 1/\lambda & 1 & 1 & 0\\
0 & 0 & 1/\lambda & 0 & 1\\
0 & 1 & 0 & 0 & 0\\
0 & 0 & 1 & 0 & 0\end{pmatrix}.$$

We can always remove one node in an isospectral reduction. Let us  remove node 4 from graph $G$. The weights of the edges after reduction become
$$R(i,j)=w(i,j)+w(i,4)\frac{w(4,j)}{\lambda},\quad i,j=1,2,3,5,6.$$
But $w(i,4)=0$ for all $i=2,3,5,6$, and $w(4,j)=0$ for $j=2,3,5,6$. The only weight that actually changes after the reduction is $R(1,1)=w(1,1)+w(1,4)w(4,1)/\lambda=2/\lambda$. All the other weights satisfy $R(i,j)=w(i,j),i\neq 1$ or $j\neq 1$. The reduced graph after removing node $4$ is identical to graph $H$.

Therefore $H$ is an isospectral reduction of $G$. The networks $H$ and $G$ will have the same reduction as long as we pick the same subset of vertices to reduce on.

We introduce now a few useful
notations. For any graph $G=(V,E,w)$, denote the maximum indegree by $m^-=\max\{d^-(v):v\in V\}$, the maximum outdegree by $m^+=\max\{d^+(v):v\in V\}$, and the maximum sum of indegree and outdegree as $m=\max\{d(v):v\in V\}$. We define a few different rules for picking a subset of the vertices of a graph.
$$\tau_1(G)=\{v\in V:d(v)>m/2\}; \tau_2(G)=\{v\in V:d^-(v)\ge m^-/2\}; \tau_3(G)=\{v\in V: d^-(v)>m^-/4\}.$$

$\tau_1$ picks the nodes whose sum of indegree and outdegree is greater than half of the maximum. $\tau_2$ picks the nodes whose indegree is greater than or equal to half of the maximum. And $\tau_3$ picks the nodes whose indegree is greater than a quarter of the maximum.

Now we apply these rules to $G$ and $H$ and see what happens. Consider the degrees of all the nodes in the two graphs. We list them in the following table
\ref{degree}.
\vspace{-8pt}
\begin{table}[h]
\centering
\caption{The degrees of each node in $G$ and $H$}\label{degree}
\begin{tabular}{|c|c|c|c|c|c|c|c|c|c|c|c|} 
\hline
graph & \multicolumn{6}{|c|}{$G$} & \multicolumn{5}{|c|}{$H$}\\
\hline
node & 1 & 2 & 3 & 4 & 5 & 6 & 1 & 2 & 3 & 5 & 6\\
\hline
indegree & 2 & 3 & 4 & 1 & 1 & 1 & 1 & 3 & 4 & 1 & 1\\
outdegree & 4 & 3 & 2 & 1 & 1 & 1 & 3 & 3 & 2 & 1 & 1\\
sum of indegree and outdegree & 6 & 6 & 6 & 2 & 2 & 2 & 4 & 6 & 6 & 2 & 2\\
\hline
\end{tabular}
\end{table}

Let us consider $\tau_1$ first. Both $G$ and $H$ have maximum sum of indegree and outdegree 6. $\tau_1(G)=\tau_1(H)=\{1,2,3\}$. $G$ and $H$ reduce to the same graph in one step under rule $\tau_1$, as shown in figure \ref{tau1}. So $G$ and $H$ are spectrally equivalent by rule $\tau_1$ with respect to both the 1-step definition in \cite{webb14} and the multi-step definition we have here. Also the reduced graph $A_1$ is an attractor for rule $\tau_1$. The 3 nodes have the same sum of indegree and outdegree, which is 4. To be more precise, if we write down the indegree, outdegree and the sum of the two, $(d^-,d^+,d)$ as an ordered triple for each node, all the nodes in $A_1$ are node 1 with $(1,3,4)$, node 2 with $(2,2,4)$ and node 3 with $(3,1,4)$, $d(1)=d(2)=d(3)=m(A_1)$.
\vspace{-3pt}
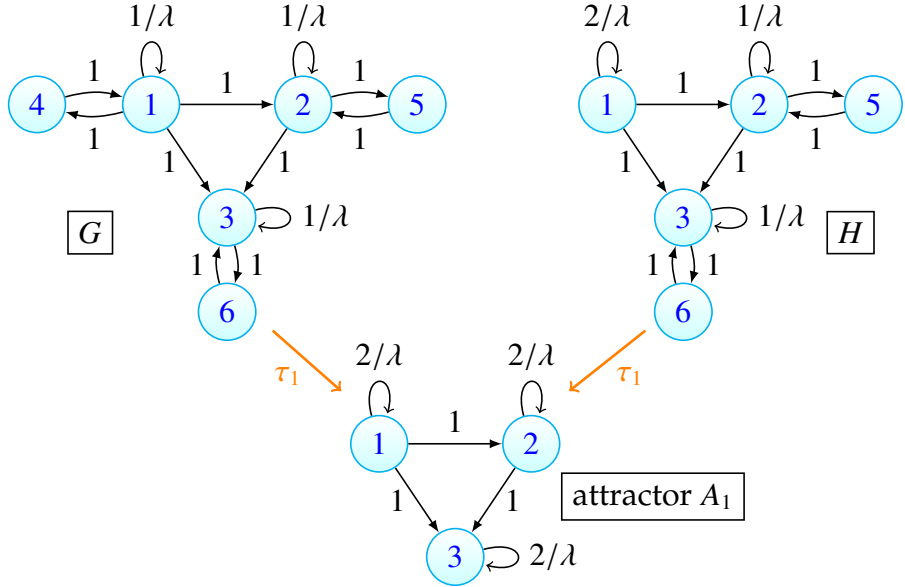
\begin {figure}[h]
\begin {tikzpicture}[-latex ,auto ,node distance =2 cm and 2cm ,on grid ,
semithick ,
state/.style ={ circle ,top color =white , bottom color = processblue!20 ,
draw,processblue , text=blue , minimum width =0.5 cm}]
\node[draw] at (-0.8,-1.7) {$G$};
\node[state] (A) at (0, 0) {$1$};
\node[state] (B) at (2,0) {$2$};
\node[state] (C) at (1,-1.5) {$3$};
\node[state] (D) at (-1.5,0) {$4$};
\node[state] (E) at (3.5,0) {$5$};
\node[state] (F) at (1,-2.75) {$6$};
\path (A) edge [] node [above] {$1$} (B);
\path (A) edge [] node [left] {$1$} (C);
\path (B) edge [] node [right] {$1$} (C);
\path (A) edge [bend left = 15] node [below] {$1$} (D);
\path (D) edge [bend left = 15] node [above] {$1$} (A);
\path (B) edge [bend left = 15] node [above] {$1$} (E);
\path (E) edge [bend left = 15] node [below] {$1$} (B);
\path (C) edge [bend left = 15] node [right] {$1$} (F);
\path (F) edge [bend left = 15] node [left] {$1$} (C);
\path (A) edge[loop above] node [] {$1/\lambda$} (A);
\path (B) edge [loop above] node [] {$1/\lambda$} (B);
\path (C) edge [loop right] node [] {$1/\lambda$} (C);

\node[draw] at (9.2,-1.7) {$H$};
\node[state] (G) at (6,0) {$1$};
\node[state] (H) at (8,0) {$2$};
\node[state] (I) at (7,-1.5) {$3$};
\node[state] (J) at (9.5,0) {$5$};
\node[state] (K) at (7,-2.75) {$6$};
\path (G) edge [] node [above] {$1$} (H)
(G) edge [] node [left] {$1$} (I)
(H) edge [] node [right] {$1$} (I)
(H) edge [bend left = 15] node [above] {$1$} (J)
(J) edge [bend left = 15] node [below] {$1$} (H)
(I) edge [bend left = 15] node [right] {$1$} (K)
(K) edge [bend left = 15] node [left] {$1$} (I)
(G) edge [loop above] node [] {$2/\lambda$} (G)
(H) edge [loop above] node [] {$1/\lambda$} (H)
(I) edge [loop right] node [] {$1/\lambda$} (I);
\draw[orange, ->, line width=1pt] (1.6,-3) to (2.5,-3.8) node at (1.8,-3.6) {$\tau_1$};
\draw[orange, ->, line width=1pt] (6.5,-3) to (5.5,-3.8) node at (6.3,-3.6) {$\tau_1$};

\node[state] (L) at (3,-4.5) {$1$};
\node[state] (M) at (5,-4.5) {$2$};
\node[state] (N) at (4,-6) {$3$};
\path (L) edge [] node [above] {$1$} (M)
(L) edge [] node [left] {$1$} (N)
(M) edge [] node [right] {$1$} (N)
(L) edge [loop above] node [] {$2/\lambda$} (L)
(M) edge [loop above] node [] {$2/\lambda$} (M)
(N) edge [loop right] node [] {$2/\lambda$} (N);
\node[draw] at (6.6,-5.2) {attractor $A_1$};
\end{tikzpicture}
\caption{Isospectral reductions by rule $\tau_1$}\label{tau1}
\end{figure}

Similarly, for rule $\tau_2$, we have $\tau_2(G)=\{1,2,3\}\neq\tau_2(H)=\{2,3\}$. Howeve, $\tau_2(\tau_2(G))=\{2,3\}=\tau(H)$. Under rule $\tau_2$, graph $G$ takes 2 reductions to reach the attractor $A_2$ while graph $H$ takes only one step (see figure \ref{tau2}). So $G$ and $H$ are spectrally equivalent with our generalized definition but not w.r.t. \cite{webb14}. In graph $A_2$, the degree triplet vectors for each node are node 2 with $(1,2,3)$ and node 3 with $(2,1,3)$. $d^-(2)=1=1/2m^-(A_2)=1/2d^-(3)$. One can see $A_1$ is an attractor by rule $\tau_1$ but not by rule $\tau_2$ since $d^-(1)=1<1/2d^-(3)=3/2$.

\vspace{-4pt}
\begin {figure}[h]
\begin {tikzpicture}[-latex ,auto ,node distance =2 cm and 2cm ,on grid ,
semithick ,
state/.style ={ circle ,top color =white , bottom color = processblue!20 ,
draw,processblue , text=blue , minimum width =0.5 cm}]
\node[draw] at (0,0) {$G$};

\node[draw] at (9,0) {$H$};

\node[state] (L) at (1,-3) {$1$};
\node[state] (M) at (3,-3) {$2$};
\node[state] (N) at (2,-4.5) {$3$};
\path (L) edge [] node [above] {$1$} (M)
(L) edge [] node [left] {$1$} (N)
(M) edge [] node [right] {$1$} (N)
(L) edge [loop above] node [] {$2/\lambda$} (L)
(M) edge [loop above] node [] {$2/\lambda$} (M)
(N) edge [loop right] node [] {$2/\lambda$} (N);
\node[draw] at (0.5,-4) {$A_1$};

\node[state] (A) at (6.5,-3.5) {$2$};
\node[state] (B) at (8.5,-3.5) {$3$};
\path (A) edge [] node [above] {$1$} (B)
(A) edge [loop above] node [] {$2/\lambda$} (A)
(B) edge [loop above] node [] {$2/\lambda$} (B);
\node[draw] at (7.5,-4.5) {attractor $A_2$};

\draw[orange, ->, line width=1pt] (0,-0.5) to (0.5,-2) node at (0,-1.5) {$\tau_2$};
\draw[orange, ->, line width=1pt] (9,-0.5) to (8.5,-2) node at (9,-1.5) {$\tau_2$};
\draw[orange, ->, line width=1pt] (4,-3.5) to (5.5,-3.5) node at (4.75,-3.25) {$\tau_2$};

\end{tikzpicture}
\caption{Isospectral reductions by rule $\tau_2$}\label{tau2}
\end{figure}
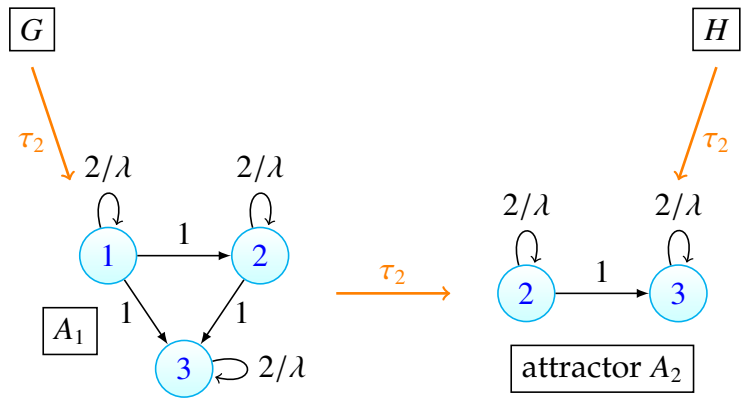

Lastly, for $\tau_3$,
$\tau_3(G)=\{1,2,3\}=\tau_3(\tau_3(G)),\tau_3(H)=\{2,3\}=\tau_3(\tau_3(H))$. $G$ and $H$ both reach an attractor in one step. But the attractors they reach are different. Under rule $\tau_3$, graph $G$ and $H$ are not isospectrally equivalent by either definition (see figure \ref{tau3}).

Here $A_1$ and $A_2$ are both attractors for rule $\tau_3$. For $A_1$, $d^-(1)=1,d^-(2)=2,d^-(3)=3$. For $A_2$ we have $d^-(2)=1,d^-(3)=2$. So $A_1$ is an attractor under rule $\tau_1$ and $\tau_3$ but not under $\tau_2$. $A_2$ is an attractor for all 3 rules we used in this example.

\begin {figure}[h]
\begin {tikzpicture}[-latex ,auto ,node distance =2 cm and 2cm ,on grid ,
semithick ,
state/.style ={ circle ,top color =white , bottom color = processblue!20 ,
draw,processblue , text=blue , minimum width =0.5 cm}]
\node[draw] at (0,0) {$G$};

\node[draw] at (9,0) {$H$};

\node[state] (L) at (1,-3) {$1$};
\node[state] (M) at (3,-3) {$2$};
\node[state] (N) at (2,-4.5) {$3$};
\path (L) edge [] node [above] {$1$} (M)
(L) edge [] node [left] {$1$} (N)
(M) edge [] node [right] {$1$} (N)
(L) edge [loop above] node [] {$2/\lambda$} (L)
(M) edge [loop above] node [] {$2/\lambda$} (M)
(N) edge [loop right] node [] {$2/\lambda$} (N);
\node[draw] at (0.5,-4) {$A_1$};

\node[state] (A) at (6.5,-3.5) {$2$};
\node[state] (B) at (8.5,-3.5) {$3$};
\path (A) edge [] node [above] {$1$} (B)
(A) edge [loop above] node [] {$2/\lambda$} (A)
(B) edge [loop above] node [] {$2/\lambda$} (B);
\node[draw] at (7.5,-4.5) {$A_2$};

\draw[orange, ->, line width=1pt] (0,-0.5) to (0.5,-2) node at (0,-1.5) {$\tau_3$};
\draw[orange, ->, line width=1pt] (9,-0.5) to (8.5,-2) node at (9,-1.5) {$\tau_3$};

\end{tikzpicture}
\caption{Isospectral reductions by rule $\tau_3$}\label{tau3}
\end{figure}
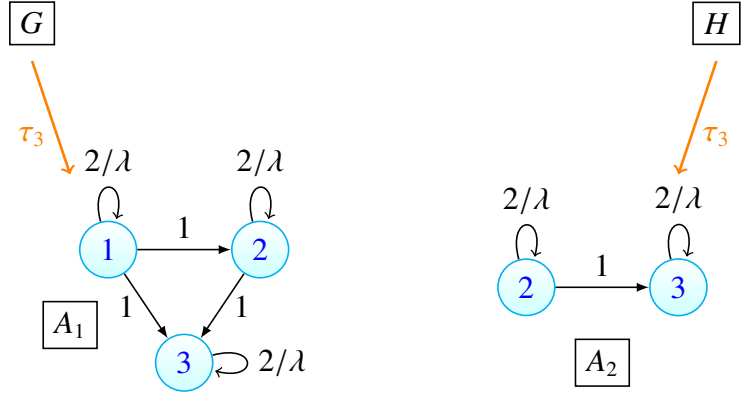

\begin{theorem}
Let $G=(V,E,w)$ with $w:E\to\C$. $S$ is a structural subset for graph $G$, then it remains to be a structural subset for any isospectral reduction (which contains $S$) of the graph $G$.
\end{theorem}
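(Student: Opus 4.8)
The plan is to check the two defining conditions of a structural set for the reduced graph, and to do so one deleted vertex at a time. Since $S\subseteq T$, the reduction of $G$ onto $T$ can be carried out by deleting the vertices of $\overline T$ one after another (the order is immaterial by the Commutativity theorem of \cite{webb14}), and, as observed earlier, the complement of a single vertex is always a structural set, so every individual deletion is a legitimate isospectral reduction. Hence it suffices to establish the inductive step: if $S$ is a structural set of a graph $G'$ and $v\in\overline S$ is a vertex of $G'$, then $S$ is a structural set of the one-vertex reduction $G''$ obtained by deleting $v$. Throughout this chain we remain in $\G_\pi$, because $w:E\to\C$ gives $\C\subseteq\W_\pi$ and isospectral reductions of graphs in $\G_\pi$ stay in $\G_\pi$ \cite{webb14}; I will use this in a moment.

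I would first dispose of the diagonal condition, which is automatic on $\G_\pi$. For any vertex $i$ of a graph in $\G_\pi$ the self-weight $w(i,i)$ is a quotient $p(\lambda)/q(\lambda)$ with $\deg p\le\deg q$, whereas $\lambda=\lambda/1$ has numerator degree exceeding denominator degree and so does not lie in $\W_\pi$. Therefore $w(i,i)\neq\lambda$ for every vertex, and in particular ``$w(i,i)\neq\lambda$ for all $i\in\overline S$'' holds for $G''$ with no further work. The whole content of the inductive step is thus the cycle condition.

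For the cycle condition I would lift cycles of $G''$ back into $G'$. Deleting $v$ replaces an edge weight by $w''(i,j)=w(i,j)+w(i,v)w(v,j)/(\lambda-w(v,v))$, so an edge $(i,j)$ can be present in $G''$ only if $G'$ contains either the edge $(i,j)$ or both edges $(i,v)$ and $(v,j)$; accordingly each edge of $G''$ lifts to a walk in $G'$ from $i$ to $j$ whose only possible intermediate vertex is $v$. Suppose for contradiction that $G''$ has a non-loop cycle $(i_0,i_1,\dots,i_k=i_0)$ avoiding $S$. Replacing each edge by its lift produces a closed walk in $G'$, all of whose vertices lie in $\overline S$ (the $i_l$ by hypothesis and $v\in\overline S$). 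Because the $i_l$ are pairwise distinct the walk carries a non-loop edge, and I would then invoke the elementary fact that a closed walk containing a non-loop edge contains a non-loop cycle --- proved by repeatedly splitting the walk at a repeated vertex and keeping the piece that carries the non-loop edge. The resulting non-loop cycle of $G'$ lies entirely in $\overline S$, contradicting the structurality of $S$ in $G'$. Hence every non-loop cycle of $G''$ meets $S$, which completes the inductive step, and the theorem follows by induction on $|\overline T|$.

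The step that needs the most care is this cycle lifting. First, one must extract a genuine non-loop cycle from the lifted closed walk rather than stopping at a walk; this is exactly where the splitting lemma is used and where one needs that the original reduced cycle is not a loop, so that a non-loop edge survives in the lift. Second, one must justify the passage from the presence of an edge in $G''$ to the existence of a corresponding edge or two-edge path in $G'$: this is sound because cancellation in $w''(i,j)$ can only delete edges of $G''$, never create them, so every edge actually present must have a combinatorial witness in $G'$. By contrast the diagonal condition is essentially free once one notes $\lambda\notin\W_\pi$, and this is the only place the hypothesis $w:E\to\C$ is really needed: for weights lying outside $\W_\pi$ a reduced self-loop could conceivably equal $\lambda$, so the conclusion could fail.
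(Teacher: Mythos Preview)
Your proof is correct and establishes the same two structural-set conditions as the paper, but the execution differs in two places worth noting. First, the paper reduces to $S'$ in one shot rather than one vertex at a time; your inductive framing via the Commutativity theorem is fine but adds a layer the paper does not need. Second, and more substantively, the diagonal condition is handled differently: the paper writes out the branch-sum formula
\[
\tilde w(i,i)=w(i,i)+\sum_{j\in V\setminus S'}\frac{w(i,j)w(j,i)}{\lambda-w(j,j)}+\cdots
\]
and observes directly that with all original weights in $\C$ this expression is a constant plus strictly proper rational terms, hence $\neq\lambda$. You instead argue abstractly that $\C\subseteq\W_\pi$, that $\G_\pi$ is closed under reduction, and that $\lambda\notin\W_\pi$. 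Your route is cleaner and makes explicit why the hypothesis $w:E\to\C$ matters (it can be weakened to $w:E\to\W_\pi$), while the paper's explicit formula makes the mechanism more visible and self-contained. For the cycle condition the paper simply asserts that any non-loop cycle of $R_{S'}(G)$ ``comes from a cycle in $G$''; your careful edge-lifting and walk-to-cycle extraction is exactly the argument one would supply to justify that sentence, so here you are more rigorous rather than different.
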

\begin{proof}
Suppose $S\subsetneq S'\subsetneq V$. Now we will show $S$ is also a structural subset for the reduced graph $R_{S'}(G)$.

(i) Any cycle (not a loop) in $R_{S'}(G)$ comes from a cycle in $G$, it has to contain a vertex in $S$.

(ii) For any $i\in S'\setminus S$, the new weight in $R_{S'}(G)$
$$\tilde w(i,i)=w(i,i)+\sum_{j\in V/S'}w(i,j)\frac{w(j,i)}{\lambda-w(j,j)}+\sum_{j\neq k,j,k\in V/S'}w(i,j)\frac{w(j,k)}{\lambda-w(j,j)}\frac{w(k,i)}{\lambda-w(k,k)}+\dots.$$

Since $w(i,i),w(j,j),w(k,k)\in\C$, the expression above shows $\tilde w(i,i)\neq\lambda$. We have $S$ is a structural subset of $R_{S'}(G)$.
\end{proof}

\begin{rk}
If we allow the original graph to take weights in $\W$, the above proof still holds as long as $\tilde w(i,i)\neq\lambda,\forall i\in S'\setminus S$. Since it's a zero measure set among all the possible values $\tilde w(i,i)$'s can take, we can say generally, the theorem is true for any graph with weights in $\W$ except for unusual cases.
\end{rk}
By the uniqueness of sequential graph reductions, we can see isospectral reduction is a dynamical system.

\section{Acknowledgements}

This work was partially supported by the NSF grant CCF-BSF 1615407 and the NIH grant 1RO1EBO25022-01.



\end{document}